\documentclass[11pt,a4]{article}

\bibliographystyle{plain}

\sloppy

\usepackage{pstricks, pst-coil, pst-node, pst-tree, multido}
\usepackage{enumerate}
\usepackage{latexsym} 
\usepackage{theorem}
\usepackage{graphics}
\usepackage{graphicx}
\usepackage{amsmath}
\usepackage{amssymb}

\newtheorem{defeng}{Definition}[section]
\newtheorem{theorem}[defeng]{Theorem}

\newtheorem{lemma}[defeng]{Lemma}

{\theorembodyfont{\rmfamily} }
{\theorembodyfont{\rmfamily} }
{\theorembodyfont{\rmfamily} }
{\theoremstyle{break}\theorembodyfont{\rmfamily} }
{\theoremstyle{break}\theorembodyfont{\rmfamily} }

\newcommand{\tp}{}

\newcounter{claim}

\newenvironment{proof}[1][]%
 {\noindent {\setcounter{claim}{0}\sc proof ---
   }{#1}{}}{\hfill$\Box$\vspace{2ex}} 


%
{\refstepcounter{claim}\vspace{1ex}\noindent{(\it\arabic{claim}){#1}{}}\it}{\vspace{1ex}}

	{\noindent {}{#1}{}}{ This proves~(\arabic{claim}).\vspace{1ex}}

\newcommand{\sm}{\setminus} 

\usepackage{ifpdf}

\ifpdf
\DeclareGraphicsRule{*}{mps}{*}{}
\fi

\begin{document}

\title{On Roussel--Rubio-type lemmas and their consequences}

\date{January 12, 2011}

\author{Nicolas Trotignon\thanks{CNRS, LIAFA, Universit\'e Paris 7 ---
    Paris Diderot. E-mail: nicolas.trotignon@liafa.jussieu.fr.
    Partially supported by the French \emph{Agence Nationale de la
      Recherche} under reference \textsc{anr 10 jcjc 0204 01}.},~
Kristina Vu\v skovi\'c\thanks{School of Computing, University of
  Leeds, Leeds LS2 9JT, UK and Faculty of Computer Science, Union
  University, Knez Mihailova 6/VI, 11000 Belgrade, Serbia. E-mail:
  k.vuskovic@leeds.ac.uk.  Partially supported by Serbian Ministry for
  Science and Technological Development Grant 144015G and EPSRC grant
  EP/H021426/1. \newline The two authors are also supported by PHC
  Pavle Savi\'c grant jointly awarded by EGIDE, an agency of the
  French Minist\`ere des Affaires \'etrang\`eres et europ\'eennes, and
  Serbian Ministry for Science and Technological Development.  }}

\maketitle

\begin{abstract}
  Roussel and Rubio proved a lemma which is essential in the proof of
  the Strong Perfect Graph Theorem.  We give a new short proof of the
  main case of this lemma.  In this note, we also give a short proof of
  Hayward's decomposition theorem for weakly chordal graphs, relying
  on a Roussel--Rubio-type lemma.  We recall how Roussel--Rubio-type
  lemmas yield very short proofs of the existence of even pairs in
  weakly chordal graphs and Meyniel graphs.
 \end{abstract}


\section{Introduction}

A \emph{hole} in a graph is an induced cycle of length at least~4.  An
\emph{antihole} is the complement of a hole.  A graph is a
\emph{Berge} graph if it contains no odd hole and no odd antihole,
where \emph{odd} refers to the length of the hole.  By $\chi(G)$ we
denote the chromatic number of $G$, and by $\omega(G)$ we denote the
maximum size of a clique in $G$.  A graph $G$ is \emph{perfect} if for
any induced subgraph $H$ of $G$, $\chi(H) = \omega(H)$.  Berge
conjectured that every Berge graph is perfect~\cite{berge:61}.  This was
known as the \emph{Strong Perfect Graph Conjecture}, was the object of
much research and was finally proved by Chudnovsky, Robertson, Seymour,
and Thomas~\cite{chudnovsky.r.s.t:spgt}.

A lemma due to Roussel and Rubio~\cite{roussel.rubio:01} is used at
many steps in~\cite{chudnovsky.r.s.t:spgt}.  In fact, the authors of
\cite{chudnovsky.r.s.t:spgt} rediscovered it (in joint work with
Thomassen) and initially named it the \emph{wonderful lemma} because of
its many applications.  The first aim of this note is to give a
previously unpublished short proof of the Roussel--Rubio Lemma, see
Section~\ref{sec:RR}.  In the same section, we recall variants of this
lemma for classes of perfect graphs.

A hole or an antihole is said to be \emph{long} if it contains at
least 5 vertices.  A graph is \emph{weakly chordal}, also called
\emph{weakly triangulated} if it contains no long hole and no long
antihole.  Weakly chordal graphs were investigated by
Chv\'atal~\cite{chvatal:starcutset} and Hayward et
al.~\cite{hayward:these,hayward:wt,hayward:disc,hayward.reed:chap.hole}.
Hayward proved a decomposition theorem for weakly chordal graphs that
implies their perfectness.  The second aim of this note is to give a
short proof of this decomposition theorem, relying on a version of the
Roussel--Rubio Lemma, see Section~\ref{sec:WT}.

An \emph{even pair} in a graph is a pair of vertices such that all
induced paths linking them have even length.  In Section~\ref{sec:ep},
we recall known proofs of the existence of even pairs in weakly
chordal graphs and also Meyniel graphs, all relying on
Roussel--Rubio-type lemmas.

We say that a graph $G$ \emph{contains} a graph $H$ if $G$ has an
induced subgraph isomorphic to $H$.  We say that $G$ is
\emph{$H$-free} if $G$ does not contain $H$.

\section{Roussel--Rubio-type lemmas}
\label{sec:RR}

A set of vertices in a graph is \emph{anticonnected} if it induces a
graph whose complement is connected.  A vertex is \emph{complete} to a
set $S$ if it is adjacent to all vertices in $S$.  The Roussel--Rubio
Lemma states that, in a sense, any anticonnected set of vertices of a
Berge graph behaves like a single vertex.  How does a vertex $v$
``behave'' in a Berge graph?  If a chordless path of odd length (at
least~3) has both ends adjacent to $v$, then $v$ must have other
neighbors in the path, for otherwise there is an odd hole.  The lemma
states roughly that an anticonnected set $T$ of vertices behaves
similarly: if a chordless path of odd length (at least~3) has both
ends complete to $T$, then at least one internal vertex of the path is
also complete to $T$.  In fact, there are two situations where this
statement fails, so the lemma is slightly more complicated.

Proofs of the Roussel--Rubio Lemma can be found in the original
paper~\cite{roussel.rubio:01}, and also in~\cite{confortiCVZ02}.  A
simpler proof due to Maffray and Trotignon can be found
in~\cite{nicolas:artemis}.  The proof given here, although previously
unpublished, is due to Kapoor, Vu\v skovi\'c, and Zambelli.
Originally, the argument can be found in an unpublished work on
\emph{cleaning}, by Kapoor and Vu\v skovi\'c, and Zambelli noticed
that this argument yields an elegant proof of the Roussel--Rubio Lemma
(cleaning is an essential component of the recognition algorithm for
Berge graphs, see~\cite{chudnovsky.c.l.s.v:reco}).  The proof is
essentially the same as the other proofs (in particular the proof in
\cite{nicolas:artemis}), except in the case when $T$ is a stable set.
So, we present here only this case.

When $T$ is a set of vertices of a graph $G$, a set $S\subseteq
V(G)\sm T$ is \emph{$T$-complete} if each vertex of $S$ is
adjacent to each vertex of $T$.  An \emph{antipath} is the complement
of a chordless path.  If $P= xx'\dots y'y$ is a chordless path of
length at least 3 in a graph $G$, we say that a pair of non-adjacent
vertices $\{u,v\}$, disjoint from $P$, is a \emph{leap} for $P$ if
$N(u)\cap V(P) = \{x,x',y\} $ and $N(v)\cap V(P) = \{x,y',y\}$.  Note
that the following statement is a reformulation
from~\cite{chudnovsky.r.s.t:spgt} of the original lemma.

\begin{lemma}[Roussel and Rubio~\cite{roussel.rubio:01}] 
  Let $G$ be an odd-hole-free graph and let $T$ be an anticonnected
  set of $V(G)$.  Let $P$ be a chordless path in $G\sm T$, of odd
  length, at least~3.  If the ends of $P$ are $T$-complete, then at
  least one internal vertex of $P$ is $T$-complete, or $T$ contains a
  leap for $P$, or $P$ has length 3, say $P = x x' y' y$, and $G$
  contains an antipath of length at least 3, from $x'$ to $y'$ whose
  interior is in $T$.
\end{lemma}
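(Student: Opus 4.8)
The plan is to induct on $|V(P)| + |T|$, treating only the case where $T$ is a stable set, as the paper indicates the other cases follow the existing arguments. Write $P = x x' \dots y' y$ and suppose for contradiction that no internal vertex of $P$ is $T$-complete, that $T$ contains no leap for $P$, and (if $|V(P)| = 4$) that $G$ has no antipath from $x'$ to $y'$ of length $\ge 3$ with interior in $T$. I would first dispose of the base case $|T| = 1$: if $T = \{t\}$, then $tx\dots yt$ would close an odd hole unless $t$ has an internal neighbor on $P$, i.e.\ unless some internal vertex is $T$-complete. So assume $|T| \ge 2$; pick two distinct $t_1, t_2 \in T$ that are non-adjacent (possible since $T$ is stable, or more generally since the complement of $T$ is connected one can pick such a pair inside any small anticonnected piece). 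The key structural device is to look, for each $t \in T$, at $N(t) \cap V(P)$: by the $|T|=1$ argument applied to $G[V(P) \cup \{t\}]$, the set $N(t) \cap V(P)$ contains at least one internal vertex of $P$ (otherwise $tP t$ is an odd hole), and in fact the neighbourhood of $t$ on $P$ must be such that $P$ together with $t$ contains no odd hole, which strongly constrains it — its ``gaps'' along $P$ have controlled parity.

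The heart of the argument is a cleaning/neighbourhood-exchange step. Consider the vertices of $T$ according to which sub-path of $P$ they ``see''. Using anticonnectivity, walk along an induced path in $\overline{G[T]}$ from $t_1$ to $t_2$; along this antipath consecutive vertices are non-adjacent in $G$, and I would compare their neighbourhoods on $P$. The main claim to establish is that if two non-adjacent $t, t' \in T$ have neighbourhoods on $P$ that differ ``mildly'', one produces either a shorter instance of the lemma (by contracting $T$ to a smaller anticonnected subset that still has both ends of some sub-path complete to it) or directly an odd hole / odd antihole, contradicting that $G$ is odd-hole-free (and, where needed, Berge). Concretely: if for some $t$ we have $x, y \in N(t)$ but $t$ is non-adjacent to all of $x', \dots, y'$, that is the $|T|=1$ contradiction; otherwise every $t$ misses at least one of $x, y$ on $P$ or has an internal neighbour, and one uses two non-adjacent such $t$'s to build a leap or the desired antipath, or to shorten $P$ to the sub-path strictly between the outermost $T$-neighbours and apply induction.

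I would organize the case analysis by the length of $P$. For $|V(P)| = 4$ (length $3$), $P = x x' y' y$: here each $t \in T$ is adjacent to at least one of $x', y'$ (else $txx'y'yt$-type configurations force an odd hole, depending on $t$'s adjacency to $x,y$); the sub-case where some $t$ is adjacent to exactly one of $x', y'$ and we can extend through $T$ gives the antipath outcome, and the sub-case where two non-adjacent $t, t'$ are each adjacent to neither $x'$ nor $y'$ but both adjacent to $x, y$ gives a leap (or an odd antihole on $\{t, t', x', y'\}$ plus more). For $|V(P)| \ge 5$, the internal path is long enough that any $t$ with no internal $T$-neighbour but adjacent to both ends yields a long odd hole directly, and the remaining configurations reduce, via picking two suitably non-adjacent elements of $T$ and restricting to a proper sub-path, to the induction hypothesis on $|V(P)| + |T|$.

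The main obstacle I expect is the bookkeeping in the reduction step: showing that when we pass to a sub-path $P'$ of $P$ (strictly shorter) we can also find a strictly smaller anticonnected $T' \subseteq T$ with both ends of $P'$ being $T'$-complete, \emph{and} that a leap or antipath for $(P', T')$ lifts back to one for $(P, T)$. Keeping the parity of $P'$ odd and at least $3$ while controlling which vertices of $T$ survive is the delicate point; I would handle it by always choosing $P'$ to run between two consecutive $T$-neighbours of a fixed vertex $t$ chosen on the antipath through $T$, which keeps the length change even, and by choosing $T' = T \sm \{t\}$ or a connected-in-complement piece thereof.
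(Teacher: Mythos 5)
Your sketch does not close the one step that is actually hard, and the mechanism you propose for it fails. The easy part (every $t\in T$ is adjacent to both ends of $P$, hence must have an internal neighbour, and the ``gaps'' between consecutive neighbours of a single $t$ have even length or length~1) is fine, but it only gives you, for each $t$ separately, some internal neighbours; the whole difficulty of the lemma is to produce a \emph{single} internal vertex (or edge) adjacent to \emph{all} of $T$ when the internal neighbourhoods of the various $t$'s interleave without any obvious common vertex and without forming a leap. Your plan for this is the reduction ``pass to a sub-path $P'$ between two consecutive $T$-neighbours of a fixed $t$ and take $T'=T\sm\{t\}$'', and you yourself flag it as the delicate point. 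It does not work as stated: the ends of such a $P'$ are only known to be adjacent to $t$, not complete to $T'$, so the induction hypothesis does not apply to $(P',T')$; moreover, by odd-hole-freeness such a sub-path has even length or length~1, so you cannot keep $P'$ odd of length at least~3 in this way; and a leap for $(P',T')$ does not lift to a leap for $(P,T)$, since a leap requires prescribed adjacencies to the ends $x,y$ of $P$ and non-adjacencies to all other internal vertices of $P$, none of which is inherited from the sub-path. Finally, you repeatedly invoke odd antiholes (``contradicting Berge''), but the hypothesis is only that $G$ is odd-hole-free; the antipath outcome is in the statement precisely so that antihole-freeness is never needed, and in the stable-set case that outcome in fact collapses to a leap.

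For comparison, the paper's proof of this case avoids all such reductions of $P$: it inducts on $|T|$ alone with a \emph{strengthened} statement (either $T$ contains a leap, or $P$ has an odd number of $T$-complete edges), proves two structural claims (every interval between marked vertices has even length or length~1, and every interval of length~1 has a common neighbour in $T$), and then applies inclusion-exclusion modulo~2 to the sets $f(v)$ of $\{v\}$-complete edges, using the induction hypothesis for all proper subsets of $T$. That parity-counting step is exactly the missing ingredient that handles the interleaved configurations your case analysis cannot; without a counterpart to it (or to some equally global argument), your proposal remains a plan rather than a proof.
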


\begin{proof}
  We prove that if $T$ is a stable set, then at least one internal
  vertex of $P$ is $T$-complete or $T$ contains a leap for $P$ (for
  the case when $T$ is not a stable set, see the proof in
  \cite{nicolas:artemis}).  In fact, we prove slightly more by
  induction on $|T|$: either there is a leap, or $P$ has an odd number
  of $T$-complete edges.

  Let $P= xx'\dots y'y$.  Mark the vertices of $P$ that have at least
  one neighbor in $T$.  Call an \emph{interval} any subpath of $P$, of
  length at least~1, whose ends are marked and whose internal vertices
  are not.  Since $x$ and $y$ are marked, the edges of $P$ are
  partitioned by the intervals of $P$.

  If $|T|=1$, then every interval $P'$ of odd length has length~1, for
  otherwise $T \cup P'$ induces an odd hole.  Since the length of $P$
  is odd, it has an odd number of intervals of odd length, and hence
  an odd number of $T$-complete edges.  Now suppose $|T|>1$ and there
  is no leap for $P$ contained in $T$.
 
  We claim that every interval of $P$ either has even length or has
  length~1.  Indeed, suppose there is an interval of odd length, at
  least 3, say $P' = x''\dots y''$, named so that $x, x'', y'', y$
  appear in this order along $P$.  Let $u$ and $v$ be neighbors of $x''$
  and $y''$ in $T$, respectively.  If $x''$ and $y''$ have a common
  neighbor $t$ in $T$ then $P'\cup \{t\}$ induces an odd hole.  Hence
  $u\neq v$, $x''\neq x$, $y''\neq y$, $v$ is not adjacent to $x''$,
  and $u$ is not adjacent to $y''$.  If $x''\neq x'$, then $P'\cup \{
  u, x, v\}$ induces an odd hole. So, $x''=x'$ and similarly,
  $y''=y'$.  Hence, $\{u, v\}$ is a leap, a contradiction.  This
  proves our claim.

  Hence, there is an odd number of intervals of length $1$ in $P$.
  Moreover, we claim that for every interval of length~1, there is a
  vertex in $T$ adjacent to both its ends.  Indeed, suppose that there
  is an interval $x''y''$ such that $x''$ and $y''$ do not have a
  common neighbor in $T$.  Let $u$ be a neighbor of $x''$ in $T$, and
  let $v$ be a neighbor of $y''$ with $u \neq v$, $uy''\not\in E(G)$,
  and $vx'' \not\in E(G)$.  Note that $x\neq x''$ and $y\neq y''$.  If
  $x''\neq x'$, then $\{ u, x, v, x'', y''\}$ induces an odd hole.
  So, $x''=x'$ and similarly $y''=y'$.  Now $\{u, v\}$ is a leap, a
  contradiction.

  For every $v\in T$, denote by $f(v)$ the set of all $\{v\}$-complete
  edges of $P$.  Let $v_1, \dots, v_n$ be the elements of $T$. We know
  that $|f(v_1) \cup \dots \cup f(v_n)|$ is odd, since, from the
  previous paragraph, it is equal to the number of the intervals of
  length~1. Moreover, by the sieve formula, also known as the
  inclusion-exclusion formula, we have:

  \begin{eqnarray}
    |f(v_1)   \cup \dots \cup f(v_n)| & = & \sum_{i} |f(v_i)| \nonumber \\
    & & -  \sum_{i\neq j} |f(v_i) \cap f(v_j)| \nonumber \\
    & & \vdots \nonumber \\
    & & + (-1)^{(k+1)} \sum_{I\subset \{1, \dots, n\}, |I|=k} |\cap_{i\in I} f(v_i)| \nonumber \\
    & & \vdots \nonumber \\
    & & + (-1)^{(n+1)} |f(v_1) \cap \dots \cap f(v_n)| \nonumber
  \end{eqnarray}

  By the induction hypothesis, we know that if $S\subsetneq T$, then
  $P$ has an odd number of $S$-complete edges (note that a leap in $S$
  is a leap in $T$, and we are assuming that $T$ has no leap).  Hence if
  $I\subsetneq \{1, \dots, n\}$, then $|\cap_{i\in I} f(v_i)|$ is odd.
  Thus, we can rewrite the above equality modulo 2 as:
  
  \[
  |f(v_1) \cup  \dots \cup f(v_n)|= (2^n-2)  +
  (-1)^{(n+1)} |f(v_1)  \cap \dots \cap f(v_n)|
  \]
    
  Since $|f(v_1)  \cup \dots \cup f(v_n)|$ is odd, it
  follows that $|f(v_1)  \cap \dots \cap f(v_n)|$ is odd,
  meaning that $P$ has an odd number of $T$-complete edges.
\end{proof}

We now present several Roussel--Rubio-type lemmas.  It seems that the
first Roussel--Rubio-type lemma ever proved is the following simple
lemma about Meyniel graphs, originally pointed out by Meyniel (and
needed in Section~\ref{sec:ep}).  A graph is a \emph{Meyniel} graph if
every odd cycle of length at least~5 has at least two chords.  We
include the original proof for completeness.

\begin{lemma}[Meyniel~\cite{meyniel:87}]
  \label{l:Mwt}
  Let $G$ be a Meyniel graph, and $v\in V(G)$.  Let $P$ be
  a path of $G\sm v$ of odd length, at least 3.  If the  ends of $P$ are
  adjacent to $v$,  then all vertices of $P$ are adjacent to $v$.
\end{lemma}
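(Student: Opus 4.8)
The plan is to argue by contradiction, choosing a shortest counterexample $P$. Suppose $P = x_0x_1\cdots x_k$ has odd length $k\ge 3$, both ends $x_0,x_k$ adjacent to $v$, but some internal vertex is non-adjacent to $v$. First I would reduce to the case where \emph{no} internal vertex is adjacent to $v$: if some $x_i$ with $0<i<k$ is adjacent to $v$, then one of the two subpaths $x_0\cdots x_i$ and $x_i\cdots x_k$ has odd length at least $3$ and also has a missing internal neighbor of $v$ (since $P$ does), contradicting minimality. So we may assume $v$ is adjacent to $x_0$ and $x_k$ only. Then $x_0x_1\cdots x_kv x_0$ is a cycle of odd length $k+1+1 = k+2$, hence odd and of length at least $5$.

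Next I would locate the chords. Since $P$ is chordless, every chord of this cycle is incident to $v$; but $v$ has no neighbor among $x_1,\dots,x_{k-1}$, so the cycle has \emph{no} chord at all. This contradicts the definition of a Meyniel graph, which requires every odd cycle of length at least $5$ to have at least two chords. Hence no counterexample exists, and all vertices of $P$ are adjacent to $v$.

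I expect the only subtlety is the reduction step ensuring the chosen subpath still violates the conclusion; this is where one must be careful that a ``missing'' internal vertex of $P$ actually lands strictly inside one of the two halves, which it does because $x_i$ itself is adjacent to $v$ and therefore is not the missing vertex. Everything else is immediate from the hole/cycle length parity. This is in the same spirit as the Roussel--Rubio Lemma above with $T=\{v\}$, only now the hypothesis is about odd cycles with few chords rather than odd holes, so the argument is even shorter.
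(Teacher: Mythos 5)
There is a genuine gap at your reduction step. From the fact that $P$ has odd length and some internal vertex misses $v$, it does \emph{not} follow that splitting at an internal neighbour $x_i$ of $v$ yields a subpath of odd length at least~$3$ that still contains a missing internal vertex: the two pieces have lengths $i$ and $k-i$, one odd and one even, and all the missing vertices may sit inside the \emph{even} piece. The smallest bad configuration already kills the induction: $P=y_0y_1y_2y_3$ of length~$3$ with $v$ adjacent to $y_0,y_1,y_3$ but not $y_2$. Here the only internal neighbour of $v$ is $y_1$, and splitting there gives pieces of lengths $1$ and $2$, so no smaller counterexample is produced, while your final argument (a chordless odd cycle through $v$) does not apply either, since $vy_1$ is a chord of the cycle $vy_0y_1y_2y_3v$.

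Moreover, the gap cannot be repaired by cleverer bookkeeping, because your proof only ever invokes chordless odd cycles, i.e.\ it only uses that $G$ has no odd hole, and the lemma is false under that weaker hypothesis: the five-vertex graph just described (a chordless $P_4$ plus $v$ adjacent to $y_0,y_1,y_3$) has no odd hole, yet $y_2$ is not adjacent to $v$. What excludes it from being Meyniel is precisely the odd cycle $vy_0y_1y_2y_3v$ of length~$5$ with the \emph{unique} chord $vy_1$. The paper's proof is built around exactly this point: it partitions $P$ into intervals between consecutive neighbours of $v$, shows each interval has length~$1$ or even (this is the odd-hole part, which you do have), and then, when $v$ is not complete to $P$, finds a length-$1$ interval consecutive with an even-length interval, yielding an odd cycle of length at least~$5$ with exactly one chord --- the step where the ``at least two chords'' clause of the Meyniel definition is indispensable, and the case your argument misses. (A small separate remark: like the paper's proof, your argument needs $P$ to be chordless; the application to even pairs only uses chordless paths, so this is how the statement should be read.)
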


\begin{proof}
  Let an \emph{interval} be any subpath of $P$, of length at least 1,
  whose ends are adjacent to $v$ and whose internal vertices are not.
  The length of any interval is~1 or is even, because otherwise,
  together with $v$, it induces an odd hole.  So, if $v$ is not
  complete to $P$, then $P$ does contain intervals of even length and
  also intervals of lengths~1 since $P$ has odd length.  So, there is
  an interval of length~1 and an interval of even length that are
  consecutive.  This yields an odd cycle with a unique chord, a
  contradiction.
\end{proof}

Another Roussel--Rubio-type lemma is used in \cite{hayward:disc} to
prove that every vertex in a minimal imperfect graph is in a long hole
or in a long antihole, see Lemma 2 in~\cite{hayward:disc}.  The
following can be seen as a Roussel--Rubio-type lemma for weakly
chordal graphs; that is used in Sections~\ref{sec:WT}
and~\ref{sec:ep}.  We include the proof from~\cite{nicolas:artemis}
for completeness.

\begin{lemma}[Maffray and Trotignon~\cite{nicolas:artemis}]
  \label{l:rrwt}
  Let $G$ be a weakly chordal graph, and let $T \subseteq V(G)$ be an
  anticonnected set.  Let $P = x \dots y $ be a chordless path of $G\sm T$ of
  length at least 3.  If  the ends of $P$ are $T$-complete,  then $P$ has an
  internal vertex that is $T$-complete.
\end{lemma}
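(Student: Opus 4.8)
The plan is to argue by induction on $|T|$.

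\emph{Base case $|T|=1$, say $T=\{t\}$.} Since $x$ and $y$ are adjacent to $t$ and $P$ is chordless, if no internal vertex of $P$ were adjacent to $t$ then $t$ together with $P$ would induce a chordless cycle of length at least $5$, a long hole; this contradicts weak chordality (cf.\ the long-hole argument used for Lemma~\ref{l:Mwt}). So some internal vertex of $P$ is $T$-complete.

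\emph{Inductive step $|T|\ge 2$.} Since $T$ is anticonnected, I would pick $t\in T$ such that $T':=T\sm\{t\}$ is still anticonnected and $t$ has a non-neighbour $t^{\ast}\in T'$ (such a $t$ exists: take a leaf of a spanning tree of the complement of the graph induced by $T$). Applying the induction hypothesis to $G$, $T'$ and $P$ yields an internal vertex $z$ of $P$ that is $T'$-complete. If $z$ is adjacent to $t$ we are done, so assume $tz\notin E(G)$. Let $u$ (resp.\ $v$) be the vertex of $P$ adjacent to $t$ that is closest to $z$ among the vertices strictly on the $x$-side (resp.\ $y$-side) of $z$; these exist because $x$ and $y$ are adjacent to $t$. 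By the choice of $u$ and $v$, the only neighbours of $t$ on the subpath $P'$ of $P$ from $u$ to $v$ are $u$ and $v$, so $t$ together with $P'$ induces a chordless cycle whose length exceeds that of $P'$ by $2$. As $z$ lies strictly between $u$ and $v$ on $P$, $P'$ has length at least $2$, and if it has length at least $3$ we obtain a long hole. Running this argument with every $T'$-complete internal vertex in place of $z$, we may therefore assume that each such vertex is non-adjacent to $t$ and has both of its $P$-neighbours adjacent to $t$ (so in particular no two of them are consecutive on $P$), and that $\{t,u,z,v\}$ induces a square, with the non-edges $tz$ and $uv$.

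\emph{The remaining (square) case — the main obstacle.} Here weak chordality has to be exploited through the absence of long \emph{antiholes}, and this is where I expect the real work to be. The idea is to combine a suitable subpath of $P$ through $z$ (together with $u$, $z$, $v$), the vertex $t$, and an induced antipath inside $T$ joining a non-neighbour of $t$ (such as $t^{\ast}$) to vertices of $T$ that witness why the $P$-neighbours of $z$ fail to be $T'$-complete; threading these together should produce an induced antihole on at least $5$ vertices, contradicting weak chordality. The delicate points are to choose $z$ and the antipath in $T$ so that all the required non-adjacencies — that the ends of $P$ are non-adjacent, that $u\not\sim v$, that $tz\notin E(G)$, and the ones coming from the anticonnectivity of $T$ — fit together into an antihole while no chord appears. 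That the obstruction is genuinely an antihole is already visible at the smallest scale: when $|T|=2$ and $P$ has length $3$, the only configuration satisfying all hypotheses except the conclusion is $\overline{C_6}$, a long antihole.
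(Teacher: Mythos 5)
Your base case and your reduction to the ``square'' configuration are fine, but the proof stops exactly where the lemma becomes nontrivial: the final paragraph is an announced plan (``threading these together \emph{should} produce an induced antihole''), not an argument. You never identify which vertices of $T$ form the antipath, never verify the adjacencies and non-adjacencies needed for the resulting set to induce a long antihole, and you yourself flag these verifications as ``the delicate points'' without carrying them out. In particular, you need a witness $s\in T'$ non-adjacent to an internal $P$-neighbour $u$ of $z$ (which requires observing that $u$ may be an end of $P$, so one must first argue that some $P$-neighbour of $z$ is internal, using that $P$ has length at least $3$), an antipath in $G[T]$ from $t$ to such an $s$ chosen \emph{minimal} so that its interior is complete to both $u$ and $z$, and then a further case analysis on whether $u$ is adjacent to the end $x$ and whether $z$ is adjacent to the end $y$ before any long antihole actually appears. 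None of this is in your text, so as written there is a genuine gap at the heart of the proof.

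For comparison, the paper does not induct on $|T|$ at all: it first notes that no vertex of $T$ can miss two consecutive vertices of $P$ (your hole argument), then takes $z$ to be an internal vertex of $P$ with the \emph{maximum} number of neighbours in $T$. If some $u\in T$ misses $z$, then $u$ is adjacent to both $P$-neighbours $x',y'$ of $z$, and the extremal choice of $z$ yields $v\in T$ adjacent to $z$ but not to $x'$ (with $x'\neq x$ chosen internal). A minimal antipath $Q$ of $G[T]$ from $u$ to $v$ then has all its internal vertices adjacent to both $x'$ and $z$, and the three configurations $V(Q)\cup\{x,x',z\}$, $V(Q)\cup\{z,x',y\}$, $V(Q)\cup\{x,x',z,y\}$ force a long antihole. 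Your inductive setup (a $T'$-complete internal vertex $z$) could likely be completed along the same lines, but the extremal-choice argument shows the induction is unnecessary, and in either case the antipath construction and chord-checking you omitted are the actual content of the proof.
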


\begin{proof}
  Note that no vertex $t \in T$ can be non-adjacent to two consecutive
  vertices of $P$, for otherwise $V(P) \cup \{t\}$ contains a long
  hole.  Let $z$ be an internal vertex of $P$ adjacent to a maximum
  number of vertices of $T$.  Suppose for a contradiction that there
  exists a vertex $u \in T \setminus N(z)$.  Let $x'$ and $y'$ be the
  neighbors of $z$ along $P$, so that $x, x', z, y', y$ appear in this
  order along $P$.  Then, from the first sentence of this proof, and
  subject to the conditions established so far, $ux', uy' \in E(G)$.
  Without loss of generality, we may assume $x' \neq x$.  From the
  choice of $z$, since $ux' \in E(G)$ and $uz \notin E(G)$, there
  exists a vertex $v \in T$ such that $vz\in E(G)$ and $vx'\notin
  E(G)$.  Since $G[T]$ is anticonnected, there exists an antipath $Q$
  of $G[T]$ from $u$ to $v$.  Suppose that $u, v$ are chosen subject
  to the minimality of this antipath.  From the first sentence and
  subject to the conditions established so far,
  internal vertices of $Q$ are all adjacent to $x'$ or $z$ and from
  the minimality of $Q$, internal vertices of $Q$ are all adjacent to
  $x'$ and $z$.  If $x'x \notin E(G)$ then $V(Q) \cup \{ x, x', z\}$
  induces a long antihole. So $x'x \in E(G)$.  If $zy \notin E(G)$
  then $V(Q) \cup \{z, x', y\}$ induces a long antihole.  So $zy \in
  E(G)$ and $y = y'$.  Now, $V(Q) \cup \{x, x', z, y\}$ induces a
  long antihole, a contradiction.
\end{proof}

Let $C(T)$  denote the set of all $T$-complete vertices.  The
following is implicit in~\cite{nicolas:artemis}.

\begin{lemma}
  \label{l:pathWT}
  Let $G$ be a weakly chordal graph, and $T\subseteq V(G)$ a set of
  vertices such that $G[T]$ is anticonnected and $C(T)$ contains at
  least two non-adjacent vertices.  If $T$ is inclusion-wise maximal
  with respect to these properties, then any chordless path of $G\sm
  T$ whose ends are in $C(T)$ has all its vertices in $C(T)$.
\end{lemma}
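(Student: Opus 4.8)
The plan is to argue by contradiction, combining Lemma~\ref{l:rrwt} with a maximality argument. Suppose $P=x\dots y$ is a chordless path of $G\sm T$ whose ends lie in $C(T)$ but which has a vertex outside $C(T)$; then $P$ has length at least $2$, since a path of length at most $1$ has no internal vertex. Starting from a vertex of $P$ not in $C(T)$ and walking towards each end of $P$ until I first meet a vertex of $C(T)$, I obtain a subpath $Q=a\dots b$ of $P$ such that $a,b\in C(T)$, no internal vertex of $Q$ lies in $C(T)$, and $Q$ has at least one internal vertex; so $Q$ is a chordless path of $G\sm T$ of length at least $2$ whose ends are $T$-complete.

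If $Q$ has length at least $3$, then Lemma~\ref{l:rrwt} produces a $T$-complete internal vertex of $Q$, contradicting the choice of $Q$. So $Q=azb$ has length exactly $2$, with $z\notin C(T)$, $a\neq b$, and $ab\notin E(G)$ (the last because $Q$ is chordless). Now I would apply the maximality of $T$ to the set $T'=T\cup\{z\}$. First, $T'\supsetneq T$. Second, $G[T']$ is still anticonnected: since $z$ is not $T$-complete it has a non-neighbour in $T$, so in $\overline G$ the vertex $z$ has a neighbour in the connected graph $\overline{G[T]}$, whence $\overline{G[T']}$ is connected. Third, $C(T')=C(T)\cap N(z)$, and this set contains both $a$ and $b$, because $a$ and $b$ are exactly the neighbours of $z$ on $Q$. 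By the maximality of $T$, the set $C(T')$ cannot contain two non-adjacent vertices, so $ab\in E(G)$ --- a contradiction.

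I do not expect a serious obstacle, but the point that makes the argument non-trivial is the choice of which vertex to adjoin to $T$. One cannot adjoin a $T$-complete vertex of $P$: such a vertex is isolated in $\overline{G[T]}$, so $G[T']$ would no longer be anticonnected. The enlargement must use a vertex $z\notin C(T)$, and then the only ``new'' members of $C(T')$ that one can point to are the $C(T)$-neighbours of $z$ on $P$; on a run of two or more consecutive non-$C(T)$ vertices the two flanking vertices of $C(T)$ are not both adjacent to a single such $z$, which is precisely why that case must be handled separately via Lemma~\ref{l:rrwt}.
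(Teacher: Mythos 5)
Your proof is correct and follows essentially the same route as the paper's: extract a minimal subpath whose ends are in $C(T)$ and interior avoids $C(T)$, dispose of length at least~3 via Lemma~\ref{l:rrwt}, and in the length-2 case add the middle vertex to $T$ to contradict maximality. You merely spell out details (anticonnectedness of $T\cup\{z\}$ and the identification $C(T\cup\{z\})=C(T)\cap N(z)$) that the paper leaves implicit.
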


\begin{proof}
  Let $P$ be a chordless path in $G\sm T$ whose ends are in $C(T)$.  If some
  vertex of $P$ is not in $C(T)$, then $P$ contains a subpath $P'$ of
  length at least 2 whose ends are in $C(T)$ and whose interior is
  disjoint from $C(T)$.  If $P'$ is of length~2, say $P'=a\tp t \tp b$,
  then $T\cup \{t\}$ is a set that contradicts the maximality of $T$.
  If $P'$ is of length greater than~2, then it contradicts
  Lemma~\ref{l:rrwt}.
\end{proof}

\section{Weakly chordal graphs}
\label{sec:WT}

Here we give a new simple proof of Hayward's decomposition theorem for
weakly chordal graphs.  A \emph{cutset}\index{cutset} in a graph is a
set $S$ of vertices such that $G\sm S$ is disconnected.  A \emph{star
  cutset} of a graph is a set of vertices $S$ that contains a vertex
$c$ such that $S \subseteq \{c\}\cup N(c))$ and that is a cutset.  

\begin{theorem}[Hayward \cite{hayward:these,hayward.reed:chap.hole}]
  \label{th:wt}
  If  $G$ is a weakly chordal graph,  then one of the following holds: 
  \begin{itemize}
  \item $G$ is a complete graph;
  \item $G$ is the complement of a perfect matching;
  \item $G$ admits a star cutset.
  \end{itemize}
\end{theorem}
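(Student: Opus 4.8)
The plan is to prove, by induction on $|V(G)|$, that a weakly chordal graph $G$ which is neither complete nor the complement of a perfect matching admits a star cutset. I would first reduce to the case where $G$ is connected: if $G$ is disconnected and has an edge, then $\{c\}$ is a star cutset for any vertex $c$ with a neighbour; and if $G$ is disconnected and edgeless, then either $|V(G)|=2$ and $G$ is the complement of a perfect matching, or $|V(G)|\ge 3$ and any single vertex is a star cutset. So assume $G$ is connected and, clearly, not complete. Then some vertex has two non-adjacent neighbours, for otherwise every neighbourhood induces a clique, $G$ is a disjoint union of cliques, and, being connected, $G$ is complete. Hence the family $\mathcal T$ of anticonnected sets $T\subseteq V(G)$ with $C(T)$ not a clique is non-empty; fix an inclusion-wise maximal $T\in\mathcal T$, note that $T\neq\emptyset$, and put $C=C(T)$ and $Z=V(G)\sm(T\cup C)$. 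Fix two non-adjacent vertices $a,b\in C$.

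Two facts drive the argument. First, by maximality, $C\cap N(z)$ is a clique for every $z\in Z$: indeed $T\cup\{z\}$ is anticonnected (as $G[T]$ is anticonnected and $z$ has a non-neighbour in $T$) and $C(T\cup\{z\})=C\cap N(z)$, so maximality of $T$ forces this set to be a clique. Second, our $T$ is inclusion-wise maximal among anticonnected sets whose complete-set is not a clique, which is exactly what Lemma~\ref{l:pathWT} requires; hence any chordless path of $G\sm T$ with both ends in $C$ has all its vertices in $C$. Combining the two, I would record the following observation: \emph{if $x,x'\in C$ are distinct and are joined in $G$ by a path whose interior lies in $Z$, then $x\sim x'$}. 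Indeed, a shortest such path is chordless, has ends in $C$ and lies in $G\sm T$, so by Lemma~\ref{l:pathWT} all its vertices lie in $C$ — impossible, since its interior is a non-empty subset of $Z$.

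Next I split on whether $Z=\emptyset$. Suppose $Z\neq\emptyset$. If no edge joins $Z$ to $C$, then $N[a]$ is a star cutset centred at $a$: deleting $N[a]$ removes all of $T$, so every surviving vertex of $Z$ keeps all of its neighbours inside $Z$, while the component of $b$ stays inside $C$, producing at least two components. If instead some edge joins $Z$ to $C$, choose a component $W$ of $G[Z]$ with $N(W)\cap C\neq\emptyset$; applying the observation above to two members of $N(W)\cap C$ and a path of $G[W]$ between their neighbours in $W$ shows that $N(W)\cap C$ is a clique, so it omits at least one of $a,b$, say $a$. Then $S:=T\cup(N(W)\cap C)$ is a star cutset: for any $q\in N(W)\cap C$ we have $S\subseteq N[q]$, since $q\in C$ is complete to $T$ and $N(W)\cap C$ is a clique, and deleting $S$ separates $W$ (all of whose external neighbours lie in $S$) from $a$.

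Finally suppose $Z=\emptyset$, so $V(G)=T\cup C$ with every vertex of $C$ complete to $T$. Since $G[C]$ is a proper induced subgraph that is not complete, the induction hypothesis gives that either $G[C]$ has a star cutset, which I lift to a star cutset of $G$ by adjoining $T$ (its centre lies in $C$, hence is complete to $T$), or $G[C]$ is the complement of a perfect matching and thus has a non-adjacent pair $\{x,x'\}$ with $x$ and $x'$ each complete to $C\sm\{x,x'\}$. In the latter case I apply induction to $G[T]$: if $G[T]$ has a star cutset, lift it by adjoining $C$; if $G[T]$ is complete, then $T=\{t\}$ is a single vertex and $\{t\}\cup(C\sm\{x,x'\})$ is a star cutset, its centre $t$ being universal and its deletion leaving only the non-adjacent pair $\{x,x'\}$; and if $G[T]$ is the complement of a perfect matching, then the non-edges of $G$ form a perfect matching, so $G$ itself is the complement of a perfect matching, contrary to hypothesis. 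I expect the main obstacle to be the middle case of the $Z\neq\emptyset$ analysis — showing that $N(W)\cap C$ is a clique, which is exactly where the Roussel--Rubio-type Lemma~\ref{l:pathWT} enters — together with the routine bookkeeping of verifying that each set produced is simultaneously a \emph{star} and a genuine cutset.
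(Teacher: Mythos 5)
Your proof is correct, and although it shares the paper's overall frame --- induction on $|V(G)|$, an inclusion-wise maximal anticonnected set $T$ with $C(T)$ not a clique, and Lemma~\ref{l:pathWT} as the engine --- it handles the main case by a genuinely different construction. The paper applies the induction hypothesis to $G[C(T)]$ in every case: a star cutset of $G[C(T)]$ lifts by adjoining $T$, and otherwise $C(T)$ is the complement of a perfect matching, whose structure supplies a vertex $y\in C(T)$ with a neighbour outside $T\cup C(T)$ and its unique non-neighbour $y'$, giving the star cutset $T\cup C(T)\sm\{y'\}$; there Lemma~\ref{l:pathWT} certifies the \emph{cutset} property (no chordless path from $y$ to $y'$ in $G\sm T$ can leave $C(T)$). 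You instead invoke induction only when $V(G)=T\cup C(T)$ (where your argument coincides with the paper's), and when $Z=V(G)\sm(T\cup C(T))\neq\emptyset$ you argue directly: for a component $W$ of $G[Z]$ attached to $C(T)$, Lemma~\ref{l:pathWT} shows $N(W)\cap C(T)$ is a clique, so $T\cup(N(W)\cap C(T))$ is a star cutset whose cutset property is automatic because $W$ is a component of $G[Z]$ --- the lemma is used for the \emph{star} property rather than the cutset property. Your route never needs the complement-of-perfect-matching structure of $C(T)$ in the main case (and your preliminary fact that $C(T)\cap N(z)$ is a clique for each $z\in Z$ is subsumed by your later observation), at the price of an extra case split on whether $Z$ sends an edge to $C(T)$; the paper's route is more uniform, extracting the star cutset in one stroke from the induction applied to $G[C(T)]$. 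One cosmetic point: your key observation should be phrased as a contradiction argument under the assumption $x\not\sim x'$, since otherwise the shortest path may be the edge $xx'$ itself and its interior is empty; as you actually use it (two attachments of $W$ joined through $W$), this is harmless.
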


\begin{proof}
  We proceed by induction on $|V(G)|$.  If $G$ is a disjoint union of
  complete graphs (in particular when $|V(G)| = 1$), then the theorem
  holds: if there is more than one component, then some vertex is a
  star cutset, unless $G$ has exactly 2 vertices, that are furthermore
  not adjacent, in which case it is the complement of a perfect
  matching.  Otherwise, we may assume that $G$ contains a chordless
  path $P$ on~3 vertices.  Hence, there exists a set $T$ of vertices
  such that $G[T]$ is anticonnected and $C(T)$ contains at least two
  non-adjacent vertices, because the center of $P$ forms such a set
  $T$.  Let us assume that $T$ is maximal, as in Lemma~\ref{l:pathWT}.
  Since $C(T)$ is not a clique, by the induction hypothesis, we have
  two cases to consider:  the graph induced by $C(T)$ has a star cutset
  $S$, or the graph induced by $C(T)$ is the complement of a perfect
  matching.  In the first case, by Lemma~\ref{l:pathWT}, $T\cup S$ is
  a star cutset of $G$.  So, we may assume that we are in the second
  case.

  Suppose first that $V(G) = T \cup C(T)$.  Then by the induction
  hypothesis, either $T$ is a clique (in which case $|T|=1$ since $T$
  is anticonnected), or $T$ induces the complement of a perfect
  matching or $T$ has a star cutset $S$.  If $|T|=1$, then
  $T \cup C(T) \sm \{x, y\}$, where $x, y$ are two non-adjacent
  vertices in $C(T)$, is a star cutset of $G$.  In the second case,
  $G$ itself is the complement of a perfect matching. In the third
  case, $S\cup C(T)$ is a star cutset of $G$.

  So, we may assume that there exists a vertex $x \in V(G) \sm (T \cup
  C(T))$.  We choose $x$ with a neighbor $y$ in $C(T)$; this is
  possible otherwise $T$ together with any vertex of $C(T)$ forms a
  star cutset of $G$.  

  Recall that $C(T)$ is the complement of a perfect matching.  Let
  $y'$ be the non-neighbor of $y$ in $C(T)$.  We claim that $S = T
  \cup C(T) \sm \{y'\}$ is a star cutset of $G$ separating $x$ from
  $y'$.  First, observe that $S\subseteq \{y\}\cup N(y)$.  Also, if
  there is a path in $G\sm S$ from $x$ to $y'$, then there is a
  chordless path, and by appending $y$ to that chordless path we see
  that $G\sm T$ contains a chordless path from $y$ to $y'$ that is not
  included in $C(T)$, which contradicts Lemma~\ref{l:pathWT}.  This
  proves our claim.
\end{proof}

An easy corollary of Theorem~\ref{th:wt} is another theorem of
Hayward~\cite{hayward:wt} stating that if $G$ is weakly chordal
on at least 3 vertices then  $G$ or $\overline{G}$ admits a star
cutset.  This implies the perfectness of
weakly chordal graphs because Chv\'atal~\cite{chvatal:starcutset}
proved that a minimally imperfect graph has no star cutset.

\section{Even pairs}
\label{sec:ep}

In this section, neither the results nor their proofs are new, but we
include them because we think that it is interesting to see how they
all rely on Roussel--Rubio-type lemmas.

Even pairs are a tool to prove perfectness of graphs and to give
polynomial time coloring algorithms (see~\cite{everett.f.l.m.p.r:ep}).
Roussel--Rubio-type lemmas provide  a good tool to prove the existence
of an even pair.  Even pairs are used in~\cite{nicolas:artemis} to give a
polynomial time coloring algorithm for a class of graphs that
generalizes Meyniel graphs, weakly chordal graphs, and perfectly
orderable graphs, the class of so-called \emph{Artemis graphs}.  They are used
in~\cite{chudnovsky.seymour:even} to significantly shorten the proof
of the Strong Perfect Graph Theorem.  Here we give two very simple
examples of this technique.  The first one is due to Meyniel.

\begin{theorem}[Meyniel \cite{meyniel:87}]
  If $G$ is a Meyniel graph, then either $G$ is a complete graph or $G$ has an
  even pair. 
\end{theorem}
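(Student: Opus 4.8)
The plan is to argue by contradiction: suppose $G$ is a Meyniel graph that is not complete and has no even pair, and derive a contradiction using Lemma~\ref{l:Mwt}. Since $G$ is not complete, there exist two non-adjacent vertices; among all non-adjacent pairs, I would pick a pair $\{x,y\}$ that is ``as close to being an even pair as possible'' in a suitable sense — for instance, minimizing the length of a shortest odd induced path between them, or more usefully, I expect the right setup is to look at a vertex $v$ together with two non-adjacent neighbors and exploit the star-cutset-free local structure that Lemma~\ref{l:Mwt} encodes. The cleanest route seems to be: since $\{x,y\}$ is not an even pair, there is an odd induced path $P$ from $x$ to $y$ of length at least $3$; choose such $x,y$ with $P$ of minimum length over all non-adjacent pairs.

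First I would record that by minimality, every pair of internal vertices of $P$ that is non-adjacent in $G$ \emph{is} an even pair, and more importantly that $x$ and $y$ have no common neighbor off $P$ behaving badly. The key move is to find a vertex $v$ playing the role in Lemma~\ref{l:Mwt}: if some vertex $v \notin V(P)$ is adjacent to both $x$ and $y$, then $v$ is adjacent to both ends of the odd path $P$ of length $\geq 3$, so by Lemma~\ref{l:Mwt} $v$ is complete to $P$; but then replacing $P$ by a shorter odd path through $v$ (going $x, v, y$ has length $2$, even, so instead one uses $v$ to shortcut and produce a shorter odd induced path between a non-adjacent pair among $\{x\}\cup(\text{interior})$) contradicts minimality. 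The heart of the argument is therefore to show such a common neighbour $v$ must exist, or alternatively to handle the case where $x,y$ have no common neighbour at all — in that case $N(x)$ and $N(y)$ should be used to build an odd cycle with at most one chord, contradicting the Meyniel property.

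I expect the main obstacle to be organizing the case analysis so that the path-shortening step is actually legitimate, i.e.\ that after applying Lemma~\ref{l:Mwt} one genuinely obtains a \emph{chordless} odd path strictly shorter than $P$ joining a \emph{non-adjacent} pair of vertices. Concretely, once $v$ is complete to $P = x p_1 p_2 \dots p_{k} y$ (with $k$ even since the length is odd and at least $3$), the vertex $v$ together with $x$ and $p_2$ (non-adjacent, since $P$ is chordless) yields the path $x\, v\, p_2$? — no, that is length $2$; rather one must chase a non-adjacent pair at odd distance. The correct bookkeeping is that $\{v,p_1\}$ or $\{v,p_{k-1}\}$ need not be the right pair; instead one should observe $v$ and $x$ are adjacent, $v$ and $p_1$ adjacent, so the relevant non-adjacency to exploit is between two interior vertices of $P$, say $p_i$ and $p_{i+2}$ — but these might be adjacent to $v$ too, collapsing distances. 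Getting this telescoping right — ultimately producing a shorter odd chordless path and hence the contradiction — is the delicate point; everything else (the existence of $P$, the disjointness conditions, invoking Lemma~\ref{l:Mwt}) is routine.
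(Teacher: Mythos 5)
Your proposal has a genuine gap at its two central steps, and as written it is not a proof. First, the claim that a common neighbour $v$ of $x$ and $y$ must exist (or that its absence can be handled by ``building an odd cycle with at most one chord'' from $N(x)$ and $N(y)$) is unjustified, and the common-neighbour claim is simply false in general: in an induced path $x\,a\,b\,y$ (which is a Meyniel graph), $x$ and $y$ are non-adjacent with no common neighbour, and nothing in the Meyniel property forces one. Second, even in the case where $v$ exists and Lemma~\ref{l:Mwt} makes it complete to $P$, the path-shortening step that is supposed to contradict minimality is exactly the part you leave open --- as you yourself note, the shortcuts through $v$ have even length, and there is no identified non-adjacent pair joined by a strictly shorter odd chordless path. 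So the contradiction never materializes; what remains is a plan with its key mechanism missing, not an argument.

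The paper's proof avoids all of this by running the induction in a different place. If $G$ is not a disjoint union of complete graphs, it contains an induced path on three vertices; let $v$ be its centre, so $N(v)$ is not a clique. By induction applied to $G[N(v)]$, that graph has an even pair $\{a,b\}$. Now Lemma~\ref{l:Mwt} lifts this to $G$: any induced $a$--$b$ path in $G$ of odd length at least $3$ avoiding $v$ has both ends adjacent to $v$, hence all its vertices adjacent to $v$, so it lies entirely inside $G[N(v)]$, contradicting the choice of $\{a,b\}$; and any induced path through $v$ is just $a\,v\,b$, of length $2$. This is the idea your proposal is missing: rather than a minimal counterexample with a shortest odd path, one inducts on the neighbourhood of the centre of a $P_3$ and uses Lemma~\ref{l:Mwt} only to show that an even pair of $G[N(v)]$ remains an even pair of $G$.
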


\begin{proof}
  We proceed by induction on $|V(G)|$.  If $G$ is a disjoint union of
  complete graphs (in particular when $|V(G)| = 1$), then the theorem holds:
  if there is more than one component, then an even pair is obtained
  by taking two vertices in different components.  Otherwise, $G$ contains a
  chordless path $P$ on 3 vertices.  Let $v$ be the internal vertex of $P$.  By
  the induction hypothesis, $N(v)$ contains an even pair for $G[N(v)]$.  By
  Lemma~\ref{l:Mwt}, this is an even pair for $G$.
\end{proof}

Since Meyniel~\cite{meyniel:87} proved that a minimally imperfect
graph does not contain an even pair, the theorem above implies that
Meyniel graphs are perfect.  The following is originally due to
Hayward, Ho\`ang, and Maffray~\cite{hayward.hoang.m:90}, but the proof
given here is implicitly given in~\cite{nicolas:artemis}.  A
\emph{2-pair} of vertices is a pair $a, b$ such that all chordless
paths linking $a$ to $b$ have length~2.

\begin{theorem}[Hayward, Ho\`ang, and Maffray~\cite{hayward.hoang.m:90}]
  \label{th:2pair}
  If $G$ is a weakly chordal graph then either $G$ is a clique or
  $G$ has 2-pair.
\end{theorem}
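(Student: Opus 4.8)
The plan is to mimic the structure of the previous two "even pair / 2-pair" proofs, using induction on $|V(G)|$ together with the maximal-$T$ machinery from Lemma~\ref{l:pathWT}. First I would dispose of the base case: if $G$ is a disjoint union of cliques (in particular if $|V(G)|=1$), then either $G$ is a clique, or it has at least two components, and then any two vertices in distinct components form a 2-pair (the only chordless paths between them would have to pass through other components, which is impossible, so in fact there is no chordless path at all --- one must be slightly careful here, but the intended reading is surely that we first take $G$ connected, or we argue via a component structure). So I may assume $G$ is connected and not a clique, hence contains a chordless path on three vertices, and therefore (as in the proof of Theorem~\ref{th:wt}) there is a set $T$ with $G[T]$ anticonnected and $C(T)$ containing two non-adjacent vertices; choose $T$ inclusion-wise maximal with these properties, so that Lemma~\ref{l:pathWT} applies.

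Next I would apply the induction hypothesis to $G[C(T)]$: since $C(T)$ is not a clique (it contains two non-adjacent vertices), $G[C(T)]$ --- which is weakly chordal, being an induced subgraph --- has a 2-pair, say $\{a,b\}$. The key claim is then that $\{a,b\}$ is a 2-pair of $G$ itself. To see this, let $R$ be any chordless path of $G$ from $a$ to $b$. The ends $a,b$ lie in $C(T)$, so if $R$ avoids $T$, then by Lemma~\ref{l:pathWT} all vertices of $R$ lie in $C(T)$, hence $R$ is a chordless path of $G[C(T)]$ between $a$ and $b$, so it has length~$2$ since $\{a,b\}$ is a 2-pair there. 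The remaining case is that $R$ meets $T$; here I would use that every vertex of $C(T)$ is complete to $T$, so $a$ and $b$ are adjacent to every vertex of $T$ that $R$ contains. If $R$ contains a vertex $t\in T$ with $t\notin\{a,b\}$ (which it must, as $T\cap C(T)=\emptyset$), then $a$ and $b$ are both adjacent to $t$; since $R$ is chordless and passes through $t$, the only way this is consistent is if $t$ is the unique internal vertex of $R$, i.e.\ $R = a\,t\,b$ has length~$2$. (If $R$ had length $\geq 3$, one of $a,b$ would be adjacent to $t$ while not being a neighbor of $t$ on the path, creating a chord.)

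The step I expect to be the main obstacle is precisely this last argument that a chordless path through a vertex $t$ complete to both endpoints must have length~$2$: one has to rule out $t$ being an endpoint (impossible, since $t\in T$ and $a,b\in C(T)$ are disjoint) and rule out $R$ having length~$\geq 3$ with $t$ internal (the neighbors of $a$ and $b$ on $R$ other than $t$ would be non-adjacent to $a$ resp.\ $b$\,? --- no, the chord would be the edge from $a$ or $b$ to $t$ when $t$ is at distance $\geq 2$ along the path). I would also need to double-check the case where $R$ meets $T$ only in a way that still forces it to be short, and handle the subtlety that $T$ might be empty-like or that $C(T)$ might be all of $V(G)\setminus T$; but since Lemma~\ref{l:pathWT} is doing the heavy lifting, these reduce to short case checks. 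The overall proof should be only a few lines, entirely parallel to the Meyniel even-pair proof, with Lemma~\ref{l:pathWT} playing the role that Lemma~\ref{l:Mwt} plays there.
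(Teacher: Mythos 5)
Your proof is correct and follows essentially the same route as the paper: induction on $|V(G)|$, a maximal anticonnected set $T$ built from the center of a chordless $P_3$, the induction hypothesis applied to $G[C(T)]$, and Lemma~\ref{l:pathWT} to lift the 2-pair to $G$. The only difference is that you spell out the easy case of a chordless path meeting $T$ (forced to have length~2 since $a,b$ are $T$-complete), which the paper leaves implicit in its one-line appeal to Lemma~\ref{l:pathWT}.
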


\begin{proof}
  We proceed by induction on $|V(G)|$.  If $G$ is a disjoint union of
  complete graphs (in particular when $|V(G)| = 1$), then the theorem
  holds trivially.  We may therefore assume that $G$ contains a
  chordless path $P$ on 3 vertices.  Hence there exists a set $T$ as
  in Lemma~\ref{l:pathWT} (start with the center of $P$ to build $T$).
  Since $C(T)$ is not a clique, by the induction hypothesis, we know
  that $C(T)$ admits a 2-pair of $G[C(T)]$.  By Lemma~\ref{l:pathWT},
  it is a 2-pair of $G$.
\end{proof}

\end{document}